\theoremstyle{plain}
\newtheorem{theorem}{Theorem}
\newtheorem{lemma}{Lemma}[section]
\newtheorem{proposition}{Proposition}
\theoremstyle{remark}
 \numberwithin{equation}{section}
\begin{document} 
\noindent Differential Geometry and Its Applications.

\vskip.4in
\title{$\delta(3)$-ideal null 2-type hypersurfaces in Euclidean spaces}
\dedicatory{\textsc{
     Bang-Yen Chen$^1$ and Yu Fu$^2$ \\[1mm]}
       ${}^1$ Department of Mathematics,
        Michigan State University, \\ East Lansing, Michigan 48824-1027, USA\\
        ${}^2$ School of Mathematics and
        Quantitative Economics,\\
        Dongbei University of Finance and Economics,\\
        Dalian 116025, P. R. China}

\thanks{Email addresses: bychen@math.msu.edu, yufudufe@gmail.com}
\subjclass[2000]{53C40, 53C42} \keywords{$\delta$-invariants;  ideal
immersions; null 2-type submanifolds; null 2-type hypersurfaces;
$\delta(3)$-ideal hypersurfaces}


\begin{abstract}
In the theory of finite type submanifolds, null 2-type submanifolds
are the most simple ones, besides 1-type submanifolds (cf. e.g.,
\cite{chen1988,chenbook1984}). In particular, the classification
problems of null 2-type hypersurfaces are quite interesting and of
fundamentally important. In this paper, we prove that every
$\delta(3)$-ideal null 2-type hypersurface in a Euclidean space has
constant mean curvature and constant scalar curvature.
\end{abstract}

\maketitle
\markboth{B.Y.Chen and Y.Fu}{null 2-type hypersurfaces in Euclidean
space}
\thispagestyle{empty}

\section{Introduction}
\hspace*{\parindent}
In the late 1970s,  the first author \cite{chenbook1984} introduced
the theory of finite type submanifolds in order to derive the best possible
estimates of the total mean curvature of a compact submanifold of
Euclidean space in terms of spectral geometry. Since then the theory of finite type has been
developed greatly (see \cite{chenbook1984} for more details).

Let $x: M^n\rightarrow\mathbb{E}^m$ be an
isometric immersion of an $n$-dimensional connected  Riemannian manifold
$M^n$ into the Euclidean $m$-space $\mathbb{E}^m$. Denote by $\Delta$ the
Laplace operator with respect to the induced Riemannian metric. A
submanifold $M^n$ of $\mathbb E^m$ is said to be of {\em finite type}
\cite{chenbook1984,chen1991,chen1993,chenlue} if its position vector
field $x$  admits the following spectral decomposition:
\begin{eqnarray*}
x=c_0+x_1+\cdots+x_k,
\end{eqnarray*}
where $c_0$ is a constant vector and $x_1, \ldots, x_k$ are
non-constant maps satisfying $$\Delta x_i=\lambda_ix_i,\;\;\; i=1,
\ldots, k.$$ In particular, if all of the eigenvalues $\lambda_1, \ldots,
\lambda_k$ are mutually different, then the submanifold $M^n$ is
said to be of $k$-type. In particular, if one of $\lambda_1, \ldots, \lambda_k$
is zero, then $M^n$ is said to be of null $k$-type. Obviously, null $k$-type immersions occur only when $M^n$ is non-compact.

It is well-known that a 1-type submanifold of a Euclidean space
$\mathbb E^m$ is either a minimal submanifold of $\mathbb E^m$ or a
minimal submanifold of a hypersphere in $\mathbb E^m$.

By the definition, null 2-type submanifolds are the most simple ones
of finite type submanifolds besides 1-type submanifolds. After
choosing a coordinate system on $\mathbb E^m$ with $c_0$ as its
origin, we have the following simple spectral decomposition for a
null 2-type submanifold $M^n$:
\begin{eqnarray}\label{1.1}
x=x_1+x_2,\quad \Delta x_1=0,\quad \Delta x_2=a x_2,
\end{eqnarray}
where $a$ is non-zero real number.  According to the well-known Beltrami
formula $\Delta x=-n\overrightarrow{H}$,  \eqref{1.1} implies the following equation
\begin{eqnarray}\label{1.2}\Delta\overrightarrow{H}=a\overrightarrow{H},
\end{eqnarray} where $\overrightarrow{H}$ is the mean curvature vector.
 Biharmonic
submanifolds in $\mathbb E^m$ are defined by the equation
$\Delta\overrightarrow{H}=0$. A result from \cite{chen19882} states that a Euclidean submanifold  satisfying \eqref{1.2} is either
biharmonic, or of 1-type, or of null 2-type.

Due to its simplicity, the first author proposed  in 1991 the following problem \cite[Problem 12]{chen1991}:
\vskip.05in

{\em ``Determine all submanifolds of Euclidean spaces which are of
null 2-type. In particular, classify null 2-type hypersurfaces in
Euclidean spaces.''}
\vskip.05in

The first result on null 2-type submanifolds was obtained by the
first author  in 1988  by proving that every null 2-type surface in
$\mathbb E^3$ is an open portion of a circular cylinder
$S^1\times\mathbb R$ \cite{chen1988}. Later on, Ferr\'{a}ndez and
Lucas \cite{ferrandez1991} showed that a null 2-type hypersurface in
$\mathbb E^{n+1}$ with at most two distinct principal curvatures is
a spherical cylinder $S^p\times\mathbb R^{n-p}$. In 1995, Hasanis
and Vlachos \cite{hasanisvlachos} proved that null 2-type
hypersurfaces in $\mathbb E^4$ have constant mean curvature and
constant scalar curvature (see also \cite{defever1995}). In 2012,
the first author and Garray \cite{chengaray2014} proved that
$\delta(2)$-ideal null 2-type hypersurfaces in Euclidean space are
spherical cylinders. In addition, $\delta(2)$-ideal
$H$-hypersurfaces of a Euclidean space were classified by the first
and Munteanu in \cite{chenMunteanu2013}. In \cite{Tu}, Turgay
determined $H$-hypersurfaces in a Euclidean space with three
distinct principal curvatures. Very recently, the second author
proved in \cite{fuyu2014} that null 2-type hypersurfaces with at
most three distinct principal curvatures have constant mean
curvature and constant scalar curvature.  Null 2-type submanifolds
with codimension $\geq 2$ have been studied, among others, in
\cite{chenlue,dursun2005,dursun2007}. For the most recent surveys in
this field, we refer the readers to \cite{chen2014,chenbook1984}.

In 1991, the first author posted in \cite{chen1991} the following challenging conjecture:
\vskip.05in

{\em The only biharmonic
submanifolds of Euclidean spaces are the minimal ones}.
\vskip.05in

 Since then  biharmonic
submanifolds become a very active research subject (cf. \cite{chen2013,chen2014,chenbook1984}). However, this biharmonic conjecture remains open.

For an $n$-dimensional Riemannian manifold $M^n$ with $n\geq3$ and an integer $r\in [2,n-1]$, the
first author introduced the $\delta$-invariant $\delta(r)$ by
\begin{eqnarray}
\delta(r)(p)=\tau(p)-{\rm inf}\,\tau(L_p^r),\
\end{eqnarray}
where $\tau(p)$ is the scalar curvature of $M^n$ and ${\rm inf}\,
\tau(L_p^r)$ is the function assigning to the point $p$ the infimum of
the scalar curvature for $L_p^r$ running over all $r$-dimensional linear
subspaces in $T_pM^n$ (cf. \cite{chenbook2011} for details).

For any isometric immersion of a Riemannian $n$-manifold $M^n\, (n\geq 3)$ into a
Euclidean $m$-space $\mathbb E^m$, the first author proved the following universal inequality
\cite{C00}:
\begin{eqnarray}\label{1.4}
\delta(r)\leq\frac{n^2(n-r)}{2(n-r+1)}H^2,
\end{eqnarray} where $H^2$ is the squared mean curvature.

Since the inequality \eqref{1.4} is a very general and sharp
inequality, it is a very natural and interesting problem to classify
submanifolds satisfying the equality case of  \eqref{1.4}
identically. Following \cite{chenbook2011}, such a submanifold in
$\mathbb E^m$ is called $\delta(r)$-$ideal$.  $\delta (2)$ and
$\delta(3)$-ideal submanifolds are the simplest ideal submanifolds.
Investigating the classification problems of $\delta (2)$-ideal and
$\delta(3)$-ideal submanifolds is quite interesting. In particular,
many interesting results on $\delta(2)$-ideal submanifolds has been done by many
geometers since the invention of $\delta$-invariants  (see
\cite{chenbook2011} for details, and recent work
\cite{MunteanuVrancken, antic2007}). In contrast, few results on
$\delta(3)$-ideal submanifolds are known.

In this paper, we investigate $\delta(3)$-ideal null 2-type
hypersurfaces in Euclidean space. Our main result states that
every $\delta(3)$-ideal null 2-type hypersurface in a Euclidean
space  has constant mean curvature and constant scalar
curvature.

\section{Preliminaries}
\noindent Let $x: M^n\rightarrow\mathbb{E}^{n+1}$ be an isometric
immersion of a hypersurface $M^n$ into $\mathbb{E}^{n+1}$. Denote
the Levi-Civita connections of $M^n$ and $\mathbb{E}^{n+1}$ by
$\nabla$ and $\bar\nabla$, respectively. Let $X$ and $Y$ be
vector fields tangent to $M^n$ and let $\xi$ be a unit normal
vector field. Then the Gauss and Weingarten formulas are given
respectively by (cf. \cite{chenbook2011, chenbook1973})
\begin{eqnarray}
\bar\nabla_XY&=&\nabla_XY+h(X,Y),\label{l23}\\
\bar\nabla_X\xi&=&-AX,\label{l16}
\end{eqnarray}
where $h$ is the second fundamental form, and $A$ is the shape
operator (or the Weingarten operator). It is well known that the second
fundamental form $h$ and the shape operator $A$ are related by
\begin{eqnarray}
\langle h(X,Y),\xi\rangle=\langle AX,Y\rangle.
\end{eqnarray}
The mean curvature vector field $\overrightarrow{H}$ is given by
\begin{eqnarray}
\overrightarrow{H}=\left(\frac{1}{n}\right){\rm Tr}\,h,
\end{eqnarray} where ${\rm Tr}\,h$ is the trace of $h$.
The Gauss and Codazzi equations are given respectively by
\begin{eqnarray*}
R(X,Y)Z=\langle AY,Z\rangle AX-\langle AX,Z\rangle AY,
\end{eqnarray*}
\begin{eqnarray*}
(\nabla_{X} A)Y=(\nabla_{Y} A)X,
\end{eqnarray*}
where $R$ is the curvature tensor,\ $\left<\;\,,\;\right>$ the inner product, and $\nabla A$ is defined by
\begin{eqnarray}
(\nabla_XA)Y=\nabla_X(AY)-A(\nabla_XY)
\end{eqnarray}
for all $X, Y, Z$ tangent to $M$.
Let us put $\overrightarrow{H}=H\xi$, where $H$ denotes the mean
curvature. The scalar curvature $\tau$ is then given by
\begin{eqnarray}\label{2.6}
\tau=\frac{1}{2}(n^2H^2-{\rm Tr}\, A^2).
\end{eqnarray}
By identifying the tangent and the normal parts of the condition
\eqref{1.2}, we have the following necessary and sufficient conditions for $M^n$
to be of null 2-type in $\mathbb E^{n+1}$ (cf. e.g. \cite{chen1996,chenbook1984,chengaray2014,chenMunteanu2013}).

\begin{proposition}
Assume that $M^n$ is not of 1-type. A hypersurface $M^n$ in a
Euclidean $(n+1)$-space $\mathbb E^{n+1}$ is null 2-type if and only
if
\begin{equation}\label{2.7}
\begin{cases}
\Delta H+H \,{\rm Tr}\, A^2=aH,\\
2A\,{\rm grad}H+n\, H{\rm grad}H=0,
\end{cases}
\end{equation}
where the Laplace operator $\Delta$ acting on scalar-valued function
$f$ is given by
\begin{eqnarray}\label{2.8}
\Delta f=-\sum_{i=1}^n(e_ie_i f-\nabla_{e_i}e_i f)
\end{eqnarray}
for an orthonormal local tangent frame $\{e_1,\ldots,e_n\}$ on
$M^n$.\end{proposition}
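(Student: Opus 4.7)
The plan is to decompose the vector equation $\Delta\overrightarrow{H} = a\overrightarrow{H}$ into its normal and tangential components along $M^n$ in $\mathbb{E}^{n+1}$, and observe that the two components are precisely the two scalar equations in \eqref{2.7}. Writing $\overrightarrow{H} = H\xi$ and applying the Weingarten formula \eqref{l16}, I first obtain
\begin{equation*}
\bar\nabla_X\overrightarrow{H} = (XH)\xi - H\,AX,
\end{equation*}
which already exhibits $(XH)\xi$ as the normal part and $-HAX$ as the tangential part.

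Next, I would fix a point $p \in M^n$ and choose an orthonormal frame $\{e_1,\dots,e_n\}$ with $(\nabla_{e_i}e_j)(p) = 0$. At $p$ the convention \eqref{2.8} reduces to $\Delta\overrightarrow{H} = -\sum_i \bar\nabla_{e_i}\bar\nabla_{e_i}\overrightarrow{H}$. Differentiating the previous display once more and splitting $\bar\nabla_{e_i}(Ae_i) = \nabla_{e_i}(Ae_i) + \langle Ae_i, Ae_i\rangle\,\xi$ via the Gauss formula \eqref{l23}, a direct expansion yields
\begin{equation*}
\Delta\overrightarrow{H} = \bigl(\Delta H + H\,{\rm Tr}\,A^2\bigr)\xi + 2A({\rm grad}\,H) + nH\,{\rm grad}\,H.
\end{equation*}
The step that deserves special attention here is the identification $\sum_i (\nabla_{e_i}A)e_i = n\,{\rm grad}\,H$, which I would derive from the Codazzi equation together with ${\rm Tr}\,A = nH$; the remaining simplifications rest on $\sum_i \langle Ae_i, Ae_i\rangle = {\rm Tr}\,A^2$ and ${\rm grad}\,H = \sum_i (e_i H)e_i$.

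Comparing this decomposition with $a\overrightarrow{H} = aH\xi$ then splits $\Delta\overrightarrow{H} = a\overrightarrow{H}$ into a normal identity matching the first equation of \eqref{2.7} and a tangential vanishing matching the second. For the ``only if'' direction, a null 2-type hypersurface satisfies $\Delta\overrightarrow{H} = a\overrightarrow{H}$ by applying the Beltrami formula $\Delta x = -n\overrightarrow{H}$ to \eqref{1.1}. For the converse, reassembling \eqref{2.7} gives $\Delta\overrightarrow{H} = a\overrightarrow{H}$ with $a\neq 0$, and since $M^n$ is not of 1-type by hypothesis, the classification result of \cite{chen19882} cited in the introduction forces $M^n$ to be null 2-type. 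The only subtle point in the whole argument is reconciling the positive-spectrum sign convention in \eqref{2.8} with the signs arising from the ambient connection $\bar\nabla$, which the adapted frame handles cleanly by killing the term $\sum_i \bar\nabla_{\nabla_{e_i}e_i}\overrightarrow{H}$ at $p$.
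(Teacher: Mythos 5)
Your argument is correct and is exactly the standard derivation that the paper relies on by citation (it states this proposition without proof, referring to \cite{chen1996,chenbook1984,chengaray2014,chenMunteanu2013}): the normal/tangential splitting of $\Delta\overrightarrow{H}$, the Codazzi-based identity $\sum_i(\nabla_{e_i}A)e_i=n\,\mathrm{grad}\,H$, and the trichotomy from \cite{chen19882} (with $a\neq 0$ ruling out the biharmonic, hence minimal, hence 1-type case) are precisely the ingredients of the classical proof. No gaps.
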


We need the following result from \cite[Theorem 13.7]{chenbook2011}.
\begin{proposition}
Let $M^n$ be a hypersurface in the Euclidean space $\mathbb
E^{n+1}$. Then
\begin{eqnarray}
\delta(3)\leq\frac{n^2(n-3)}{2(n-2)}H^2,
\end{eqnarray}
where the equality case holds at a point $p$ if and only if there is
an orthonormal basis $\{e_1,\ldots, e_n\}$ at $p$ such that the
shape operator at $p$ satisfies
\begin{eqnarray}\label{2.10}
A=\left( \begin{array}{cccccc} \alpha&0&0&0&\ldots&0\\
0&\beta&0&0&\ldots&0\\0&0&\gamma&0&\ldots&0\\0&0&0&\alpha+\beta+\gamma&\ldots&0\\
\vdots&\vdots&\vdots&\vdots&\ddots&\vdots\\0&0&0&0&\ldots&\alpha+\beta+\gamma
\end{array} \right)
\end{eqnarray}
for some functions $\alpha, \beta, \gamma$ defined on $M^n$. If this
happens at every point, we call $M^n$ a $\delta(3)$-ideal
hypersurface in $\mathbb E^{n+1}$.
\end{proposition}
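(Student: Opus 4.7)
The plan is to reduce the inequality to an elementary algebraic estimate on the diagonal entries of the shape operator in a well-chosen orthonormal frame, and then to extract the equality case by tracing through the chain of estimates.

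Fix $p\in M^n$ and an arbitrary $3$-plane $L^3\subset T_pM^n$. Choose an orthonormal basis $\{f_1,f_2,f_3\}$ of $L^3$, extend it to an orthonormal basis $\{f_1,\ldots,f_n\}$ of $T_pM^n$, and write $h_{ij}=\langle Af_i,f_j\rangle$. By the Gauss equation the sectional curvature of $\mathrm{span}(f_i,f_j)$ equals $h_{ii}h_{jj}-h_{ij}^2$, so
\[
\tau-\tau(L^3)=\Bigl(\sum_{i=1}^{3}h_{ii}\Bigr)\Bigl(\sum_{k=4}^{n}h_{kk}\Bigr)+\sum_{4\le i<j\le n}h_{ii}h_{jj}\;-\;\sum_{\substack{i<j\\ \{i,j\}\not\subset\{1,2,3\}}}h_{ij}^2.
\]
Since the last sum is non-negative, it suffices to bound the two preceding terms, both of which depend only on $a_i:=h_{ii}$, with $\sum_i a_i=nH$.

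Set $T=a_1+a_2+a_3$ and $U=a_4+\cdots+a_n=nH-T$. Cauchy--Schwarz gives $\sum_{i=4}^n a_i^2\ge U^2/(n-3)$, whence $\sum_{4\le i<j\le n}a_ia_j\le (n-4)U^2/[2(n-3)]$. Therefore
\[
TU+\sum_{4\le i<j\le n}a_ia_j \;\le\; (nH-U)\,U+\frac{(n-4)U^2}{2(n-3)}\;=\;nHU-\frac{n-2}{2(n-3)}U^2,
\]
a concave quadratic in $U$ whose maximum, attained at $U=(n-3)nH/(n-2)$, equals $\frac{n^2(n-3)}{2(n-2)}H^2$. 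Hence $\tau-\tau(L^3)\le\frac{n^2(n-3)}{2(n-2)}H^2$ for every $L^3$, and passing to the supremum yields the pointwise bound on $\delta(3)(p)$.

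For the equality case, compactness of $\mathrm{Gr}(3,T_pM^n)$ guarantees that the infimum of $\tau(L^3)$ is attained at some $L^3_0$. Equality throughout the chain then forces, simultaneously, (i) $h_{ij}=0$ whenever $\{i,j\}\not\subset\{1,2,3\}$, (ii) $a_4=\cdots=a_n$ from the Cauchy--Schwarz step, and (iii) $T=nH/(n-2)$ (equivalently $U=(n-3)nH/(n-2)$) from the quadratic optimization; combining (ii) and (iii) gives $a_1+a_2+a_3=a_4=\cdots=a_n$. Condition (i) says that $L^3_0$ is $A$-invariant and $A$ is diagonal on $(L^3_0)^\perp$ in the chosen frame. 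Replacing $\{f_1,f_2,f_3\}$ by an orthonormal basis of $L^3_0$ that diagonalises $A|_{L^3_0}$ with eigenvalues $\alpha,\beta,\gamma$ preserves $T=\alpha+\beta+\gamma$, so the shape operator acquires exactly the matrix form \eqref{2.10}. The converse is a direct computation: with $A$ of that form, $L^3=\mathrm{span}(e_1,e_2,e_3)$ yields $\tau-\tau(L^3)=\tfrac{(n-2)(n-3)}{2}(\alpha+\beta+\gamma)^2=\tfrac{n^2(n-3)}{2(n-2)}H^2$.

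The main obstacle is the equality discussion: the inequality itself is routine once one decides to discard the off-diagonal squares, but the equality case demands that the three simultaneous tightness conditions---off-diagonal vanishing, equidistribution of the last $n-3$ diagonal entries, and the quadratic attaining its vertex---line up to yield $A$-invariance of $L^3_0$, the block-diagonal structure on its complement, and the identity that makes the common eigenvalue on $(L^3_0)^\perp$ equal $\alpha+\beta+\gamma$, thereby producing the specific matrix in \eqref{2.10}.
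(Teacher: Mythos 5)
Your proof is correct. Note that the paper itself offers no proof of this proposition: it is quoted verbatim from Theorem 13.7 of Chen's book \emph{Pseudo-Riemannian Geometry, $\delta$-invariants and Applications}, so there is no in-paper argument to compare against. Your route --- expand $\tau-\tau(L^3)$ via the Gauss equation, discard the off-diagonal squares $h_{ij}^2$ with $\{i,j\}\not\subset\{1,2,3\}$, apply Cauchy--Schwarz to the last $n-3$ diagonal entries, and maximize the resulting concave quadratic in $U=nH-T$ --- is essentially the hypersurface specialization of Chen's standard argument, which packages the same optimization into an algebraic lemma on real numbers subject to a quadratic trace constraint; your version has the advantage of making the three tightness conditions in the equality case completely explicit, and the extraction of the matrix form \eqref{2.10} from $A$-invariance of $L^3_0$, the scalar action of $A$ on $(L^3_0)^\perp$, and the identity $\alpha+\beta+\gamma=nH/(n-2)$ is carried out correctly. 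The only point worth stating explicitly in the converse direction is that the particular $3$-plane $\mathrm{span}(e_1,e_2,e_3)$ on which $\tau-\tau(L^3)$ equals $\frac{n^2(n-3)}{2(n-2)}H^2$ actually realizes $\inf_{L^3}\tau(L^3)$; this is immediate from the inequality you have already proved for every $3$-plane, so the argument is complete.
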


\section{$\delta(3)$-ideal null 2-type hypersurfaces}
\noindent In this section, we determine $\delta(3)$-ideal null
2-type hypersurfaces $M^n$ in Euclidean space $\mathbb E^{n+1}$ with
$n\geq4$. We assume that $M^n$ is not of 1-type, hence $M^n$ is not
minimal.

If the mean curvature $H$ is constant, the first equation of
\eqref{2.7} implies that the length of the second fundamental form
is also constant. Combining these with \eqref{2.6} shows that the
scalar curvature $\tau$ is constant as well.
Hence, in the following text we suppose that the mean curvature $H$
is non-constant.

\begin{lemma} \label{L:3.1}
Let $M^n$ be a $\delta(3)$-ideal hypersurface satisfying the second
equation of \eqref{2.7} in $\mathbb E^{n+1}$ with non-constant mean
curvature $H$. If the shape operator of $M^n$ satisfies
\eqref{2.10}, then, up to reordering of $\alpha$, $\beta$ and
$\gamma$, with respect to a suitable orthonormal frame
$\{e_1,\ldots, e_n\}$ we have
\begin{eqnarray*}
\alpha=-\frac{n}{2}H~~{and}~~ \gamma=\frac{n^2}{2(n-2)}H-\beta.
\end{eqnarray*}
\end{lemma}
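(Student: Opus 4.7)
The plan is to extract two pieces of information and combine them. First, I would read off the trace of the shape operator from the matrix form \eqref{2.10}: since the eigenvalue $\alpha+\beta+\gamma$ appears with multiplicity $n-3$ in the last block, one obtains
\begin{equation*}
nH = \operatorname{Tr} A = \alpha+\beta+\gamma+(n-3)(\alpha+\beta+\gamma)=(n-2)(\alpha+\beta+\gamma),
\end{equation*}
so $\alpha+\beta+\gamma = \dfrac{nH}{n-2}$. This gives one linear relation that $\gamma$ must satisfy in terms of $H$ and $\beta$.

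The second ingredient comes from the second equation of \eqref{2.7}, which rewrites as $A(\operatorname{grad} H) = -\tfrac{n}{2}H\,\operatorname{grad} H$. Because $H$ is non-constant, $\operatorname{grad} H$ is a non-vanishing eigenvector of $A$ on an open set, with eigenvalue $-\tfrac{n}{2}H$. Hence $-\tfrac{n}{2}H$ must coincide with one of the four distinct eigenvalue expressions coming from \eqref{2.10}, namely $\alpha$, $\beta$, $\gamma$, or $\alpha+\beta+\gamma$. I would first rule out the last possibility: if $-\tfrac{n}{2}H = \alpha+\beta+\gamma = \tfrac{nH}{n-2}$, then
\begin{equation*}
\left(\tfrac{n}{n-2}+\tfrac{n}{2}\right)H = \tfrac{n^{2}}{2(n-2)}H = 0,
\end{equation*}
forcing $H\equiv 0$ on the open set, contradicting the non-constancy (and non-minimality) of $H$.

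Therefore $-\tfrac{n}{2}H$ must equal one of $\alpha,\beta,\gamma$. Relabeling so that $\operatorname{grad} H$ is proportional to $e_1$ gives $\alpha=-\tfrac{n}{2}H$. Substituting this into $\alpha+\beta+\gamma=\tfrac{nH}{n-2}$ yields
\begin{equation*}
\beta+\gamma = \tfrac{nH}{n-2}+\tfrac{nH}{2} = \tfrac{n^{2}}{2(n-2)}H,
\end{equation*}
i.e., $\gamma=\tfrac{n^{2}}{2(n-2)}H-\beta$. The only conceptual care needed is verifying that the reordering of the first three diagonal entries (together with a possible permutation of $e_{1},e_{2},e_{3}$) is compatible with preserving the matrix form \eqref{2.10}; since that matrix is symmetric under permutations of $\{\alpha,\beta,\gamma\}$, this is automatic. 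I expect the only genuine subtlety to be handling the potential coincidences among eigenvalues when identifying the eigenspace containing $\operatorname{grad} H$, but since the algebraic identity $-\tfrac{n}{2}H=\tfrac{nH}{n-2}$ already forces $H=0$, even the degenerate cases collapse to the desired conclusion.
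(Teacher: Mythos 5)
Your proof is correct and follows essentially the same route as the paper: identify $\operatorname{grad}H$ as an eigenvector of $A$ with eigenvalue $-\tfrac{n}{2}H$ via the second equation of \eqref{2.7}, rule out the possibility that this eigenvalue is $\alpha+\beta+\gamma$ by a trace computation forcing $H=0$, and read off $\gamma$ from $\operatorname{Tr}A=nH$. The only difference is that in the excluded case the paper first invokes the multiplicity-one property of the eigenvalue $-\tfrac{n}{2}H$ to force $n=4$ before deriving the contradiction from the trace, whereas your direct use of the identity $\alpha+\beta+\gamma=\tfrac{nH}{n-2}$ handles all $n$ at once and is, if anything, slightly cleaner.
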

\begin{proof}
Let $M^n$ be a hypersurface satisfying the second equation in
\eqref{2.7} with the shape operator given by \eqref{2.10}. Also
assume that ${\rm grad}\,H$ is non-vanishing. Then one of its principal
curvatures $\lambda_1$ must be $-{nH}/{2}$ with multiplicity 1
and the corresponding principal direction is $e_1 ={\rm
grad}\,\lambda_1/|{\rm grad}\,\lambda_1|$. By taking into account
\eqref{2.10}, up to reordering $\alpha, \beta, \gamma$ we can
assume either $\lambda_1=\alpha$ or $\lambda_1=\alpha+\beta+\gamma$.
The former case gives the case of Lemma 3.1. In the latter case,
because the multiplicity of $\lambda_1$ is 1, we have $n=4$ which
implies $\alpha+\beta+\gamma=-2H$. However, from these equations and
\eqref{2.10} one can obtain
\begin{eqnarray*}
4H=\lambda_1+\lambda_2+\lambda_3+\lambda_4=2(\alpha+\beta+\gamma)=-4H,
\end{eqnarray*}
which implies $H=0$. This is a contradiction.
\end{proof}
According to Lemma \ref{L:3.1},  $e_1$ is parallel to ${\rm
grad}\,H$ and so \eqref{2.10} becomes
\begin{eqnarray}\label{3.1}
A=\mathrm{diag}(\lambda_1,\lambda_2,\lambda_3, \lambda_4, \ldots,
\lambda_n)
\end{eqnarray}
with $\lambda_1=-\frac{n}{2}H$, $\lambda_2=\beta$,
$\lambda_3=\frac{n^2}{2(n-2)}H-\beta$ and
$\lambda_4=\cdots=\lambda_n=\frac{n}{n-2}H$.

Denote by $c_1=-\frac{n}{2}$ and $c_2=\frac{n^2}{2(n-2)}$, and hence
$\frac{n}{n-2}=c_1+c_2$.
 Note that in our case $M^n$ has four distinct principal
curvatures. Thus we have
\begin{eqnarray}
\beta\neq c_1H, \;\frac{1}{2}c_2H, \; (c_1+c_2)H,\; (c_2-c_1)H,\; -c_1H.
\end{eqnarray}
Since the vector field $e_1$ is parallel to ${\rm grad}\,H$,
computing ${\rm grad}\,H=\sum_{i=1}^ne_i(H)e_i$ gives
\begin{eqnarray}\label{3.3}
e_1(H)\neq0,\quad e_i(H)=0, \quad 2\leq i\leq n.
\end{eqnarray}

Let us put $$\nabla_{e_i}e_j=\sum_{k=1}^n\omega_{ij}^ke_k,\;\;\;
1\leq i, j\leq n.$$ By computing $\nabla_{e_k}\! \langle
e_i,e_i\rangle=0$ and $\nabla_{e_k}\!\langle e_i,e_j\rangle=0$, we
find
\begin{eqnarray}
&&\omega_{ki}^i=0,\label{3.4}\\
&&\omega_{ki}^j+\omega_{kj}^i=0,\label{3.5}
\end{eqnarray}
for $i\neq j$ and  $1\leq i, j, k\leq n$. From \eqref{3.1},
\eqref{3.3} and \eqref{3.4}, the Codazzi equation reduces to
\begin{eqnarray}
e_i(\lambda_j)=(\lambda_i-\lambda_j)\omega_{ji}^j,\label{3.6}\\
(\lambda_i-\lambda_j)\omega_{ki}^j=(\lambda_k-\lambda_j)\omega_{ik}^j,\label{3.7}
\end{eqnarray}
where $i ,j ,k$ are mutually different and $1\leq i, j, k\leq n$.
From \eqref{3.3} we have
\begin{eqnarray*}
[e_i,e_j](H)=0,\quad 2\leq i, j\leq n, \quad i\neq j,
\end{eqnarray*}
which implies that
\begin{eqnarray}\label{3.8}
\omega_{ij}^1=\omega_{ji}^1, \quad 2\leq i, j\leq n, \quad i\neq j.
\end{eqnarray}
Compute $[e_1,e_i](H)=\big(\nabla_{e_1}e_i-\nabla_{e_i}e_1\big)(H)$
for $i=2, \ldots, n$. From \eqref{3.3} we find $\omega_{i1}^1=0$.
By choosing $j=1$, $i=2,  \ldots, n$ in \eqref{3.5}, and by
\eqref{3.3} we obtain $\omega_{1i}^1=0$ for $2\leq i\leq n$. Hence
we have
\begin{eqnarray}\label{3.9}
e_ie_1(H)=0,\quad 2\leq i\leq n.
\end{eqnarray}
Sequentially, by \eqref{3.9} the formula
$[e_1,e_i]\big(e_1(H)\big)=\big(\nabla_{e_1}e_i-\nabla_{e_i}e_1\big)\big(e_1(H)\big)$
gives
\begin{eqnarray}\label{3.10}
e_ie_1e_1(H)=0,\quad 2\leq i\leq n.
\end{eqnarray}
By choosing $j=1$ in \eqref{3.7}, we have
\begin{eqnarray*}
(\lambda_i-\lambda_1)\omega_{ki}^1=(\lambda_k-\lambda_1)\omega_{ik}^1,
\end{eqnarray*}
which together with \eqref{3.8} yields
\begin{eqnarray}\label{3.10}
\omega_{ij}^1=0,\quad 2\leq i, j\leq n, \quad i\neq j.
\end{eqnarray}
Combining \eqref{3.10} with \eqref{3.5} gives
\begin{eqnarray}\label{3.12}
\omega_{i1}^j=0, \quad 2\leq i, j\leq n, \quad i\neq j.
\end{eqnarray}
By choosing $k=2$ in \eqref{3.7}, we deduce that
\begin{eqnarray*}
(\lambda_i-\lambda_j)\omega_{2i}^j=(\lambda_2-\lambda_j)\omega_{i2}^j,
\end{eqnarray*}
which yields
\begin{eqnarray}\label{3.13}
\omega_{i2}^j=0, \quad 4\leq i, j\leq n, \quad i\neq j.
\end{eqnarray}
Similarly, we also have
\begin{eqnarray}\label{3.14} \omega_{i3}^j=0, \quad 4\leq i, j\leq n, \quad i\neq j.
\end{eqnarray}
 Now we state an important lemma for later
use.
\begin{lemma}\label{L:3.2}
Under the assumptions above, we have
\begin{eqnarray*} e_i(\beta)=0, \quad 2\leq i\leq n. \end{eqnarray*}
\end{lemma}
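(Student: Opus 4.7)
Via the Codazzi equation (3.6) and the skew-symmetry (3.5), the assertion $e_i(\beta) = 0$ for $2 \leq i \leq n$ reduces to the vanishing of specific connection coefficients. For $i \geq 3$, one has $e_i(\beta) = (\lambda_i - \beta)\omega_{2i}^2 = -(\lambda_i - \beta)\omega_{22}^i$; differentiating $\lambda_3 = \frac{n^2}{2(n-2)}H - \beta$ along $e_2$ gives $e_2(\beta) = (c_2 H - 2\beta)\omega_{32}^3 = -(c_2 H - 2\beta)\omega_{33}^2$. By the distinctness conditions (3.2), each scalar factor is non-zero, so the goal becomes showing $\omega_{22}^i = 0$ for $i \geq 3$ together with $\omega_{33}^2 = 0$.

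The plan is first to extract further vanishings from Codazzi beyond those in (3.10)--(3.14). Expanding $(\nabla_{e_2} A) e_l = (\nabla_{e_l} A) e_2$ for $l \geq 4$ and equating the $e_l$-components, where the left-hand side is $e_2(\lambda_l) = 0$, yields $(\beta - (c_1+c_2)H)\omega_{l2}^l = 0$, whence $\omega_{l2}^l = 0$ and by (3.5), $\omega_{ll}^2 = 0$; analogously $\omega_{ll}^3 = 0$. The Codazzi pair $(\nabla_{e_1} A) e_j = (\nabla_{e_j} A) e_1$ likewise gives $\omega_{11}^j = 0$ for every $j \geq 2$, so $\nabla_{e_1} e_1 = 0$. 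Furthermore, the $e_3$- and $e_2$-components of $(\nabla_{e_2} A) e_l = (\nabla_{e_l} A) e_2$ and $(\nabla_{e_3} A) e_l = (\nabla_{e_l} A) e_3$ yield
\[
(c_1 H + \beta)\omega_{23}^l = -(2\beta - c_2 H)\omega_{l2}^3, \qquad ((c_1+c_2)H - \beta)\omega_{32}^l = (c_2 H - 2\beta)\omega_{l2}^3,
\]
so $\omega_{23}^l$ and $\omega_{32}^l$ are explicit rational multiples of $\omega_{l2}^3$.

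To close the system I would invoke the Gauss equation. For $l \geq 4$, Gauss gives $R(e_2, e_l) e_2 = -\beta\, \tfrac{n}{n-2}\, H\, e_l$ and $R(e_2, e_3) e_2 = -\beta(c_2 H - \beta) e_3$. Computing the $e_l$-component of $R(e_2, e_l) e_2$ via iterated covariant derivatives using $\nabla_{e_l} e_2 = \omega_{l2}^3 e_3$ and $\nabla_{e_2} e_2 = \omega_{22}^1 e_1 + \omega_{22}^3 e_3 + \sum_{j \geq 4} \omega_{22}^j e_j$, and likewise for $R(e_2, e_3) e_2$, produces equations in the unknown coefficients $\omega_{22}^l, \omega_{33}^l, \omega_{22}^3, \omega_{33}^2, \omega_{l2}^3, \omega_{23}^l$. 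Combined with the proportionality relations above and, if needed, with the $e_i$-derivative of the first equation of (2.7) (namely $\Delta H + H\,{\rm Tr}\, A^2 = aH$), the system should force all the relevant $\omega$-coefficients to vanish; in particular $\omega_{22}^i = 0$ for $i \geq 3$ and $\omega_{33}^2 = 0$, proving the lemma.

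The main obstacle is the last step: the Gauss-derived equations contain quadratic terms such as $(\omega_{22}^l)^2$ and derivative terms such as $e_l(\omega_{22}^l)$, so the system is not purely linear-algebraic. The distinctness conditions (3.2), together with the already-established vanishings and the integrability of the eigendistribution associated to $\lambda_4 = \tfrac{n}{n-2}H$ (a consequence of $\omega_{ij}^2 = \omega_{ij}^3 = \omega_{ij}^1 = 0$ for $i, j \in \{4, \dots, n\}$, $i \neq j$), are expected to conspire to force the trivial solution, but verifying this requires delicate bookkeeping of the Codazzi and Gauss identities.
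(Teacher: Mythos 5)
Your opening reduction is correct but is essentially a restatement of the lemma: by Codazzi, $e_i(\beta)=-(\lambda_i-\beta)\omega_{22}^i$ for $i\ge 3$ and $e_2(\beta)=(c_2H-2\beta)\omega_{32}^3$, so proving $e_i(\beta)=0$ and proving the vanishing of those connection coefficients are the same task. The subsidiary facts you extract (e.g.\ $\omega_{ll}^2=\omega_{ll}^3=0$ for $l\ge4$, $\nabla_{e_1}e_1=0$, and the proportionality of $\omega_{23}^l$, $\omega_{32}^l$, $\omega_{l2}^3$) are all correct and reappear in the paper (Lemma \ref{L:3.3} and the relations \eqref{3.39}--\eqref{3.41}). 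But the decisive step is left at ``the system should force the trivial solution, but verifying this requires delicate bookkeeping,'' and that verification is precisely the content of the lemma; as written, the proposal is a plan, not a proof.

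Moreover, the plan as described is unlikely to close in the way you suggest. You treat the derivative of the first equation of \eqref{2.7} as an optional supplement, but it is the essential ingredient: the Gauss and Codazzi equations alone do not force $e_i(\beta)=0$ for a $\delta(3)$-ideal hypersurface with this eigenvalue structure, and the quadratic/derivative terms you worry about ($e_l(\omega_{22}^l)$, $(\omega_{22}^l)^2$, etc.) do not cancel by algebra alone. The paper's argument runs by contradiction and in a different direction: assuming $e_k(\beta)\neq0$, it uses $\langle R(e_2,e_k)e_1,e_2\rangle=0$ and $\langle R(e_3,e_k)e_1,e_3\rangle=0$ to get evolution formulas \eqref{3.15}--\eqref{3.16} for $\omega_{21}^2$ and $\omega_{31}^3$ along $e_k$, rewrites the null 2-type scalar condition as \eqref{3.17}, and then differentiates \eqref{3.17} \emph{twice} along $e_k$. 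For $k=2,3$ the two resulting relations combine to $3H(2\beta-c_2H)^2=0$, i.e.\ $\lambda_2=\lambda_3$, contradicting the assumed distinctness of the principal curvatures; for $k\ge4$ the second differentiation yields $4He_k(\beta)=0$ directly. You would need to incorporate this double differentiation of the null 2-type equation (or an equivalent use of it) to complete your argument.
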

\begin{proof}
By applying \eqref{3.1}, we deduce from the Gauss equation that
$R(e_2,e_k)e_1=0$ for $3\leq k\leq n$. It follows from \eqref{3.12}
that
\begin{eqnarray*}
&&\nabla_{e_2}\nabla_{e_k}e_1=e_2(\omega_{k1}^k)e_k+\omega_{k1}^k\Big(\sum_{i=2}^n\omega_{2k}^ie_i\Big),\\
&&\nabla_{e_k}\nabla_{e_2}e_1=e_k(\omega_{21}^2)e_2+\omega_{21}^2\Big(\sum_{i=3}^{n}\omega_{k2}^ie_i\Big),\\
&&\nabla_{[e_2,e_k]}e_1=\sum_{i=2}^n(\omega_{2k}^i-\omega_{k2}^i)\omega_{i1}^ie_i.
\end{eqnarray*}
Hence, by the definition of curvature tensor, $\langle
R(e_2,e_k)e_1,e_2\rangle=0$ gives
\begin{eqnarray}\label{3.15}
e_k(\omega_{21}^2)=\omega_{2k}^2(\omega_{k1}^k-\omega_{21}^2),\quad
3\leq k\leq n.
\end{eqnarray}
In a similar way, by taking into account $\langle
R(e_3,e_k)e_1,e_3\rangle$, we find
\begin{eqnarray}\label{3.16}
e_k(\omega_{31}^3)=\omega_{3k}^3(\omega_{k1}^k-\omega_{31}^3),\quad
k=2~{\rm or}~ 4\leq k\leq n.
\end{eqnarray}
At this moment, by using \eqref{2.8}, \eqref{3.1}, \eqref{3.3},
\eqref{3.5} and \eqref{3.6}, the first equation of \eqref{2.7}
becomes
\begin{eqnarray}\label{3.17}
&&-e_1e_1(H)-\Big\{\frac{e_1(\beta)}{c_1H-\beta}+\frac{e_1(c_1H-\beta)}{(c_1-c_2)H+\beta}-(n-3)\frac{(c_1+c_2)e_1(H)}{c_2H}\Big\}e_1(H)\nonumber
\\&&+H\big\{c_1^2H^2+\beta^2+(c_2H-\beta)^2+(n-3)(c_1+c_2)^2H^2-a\big\}=0.
\end{eqnarray}
We first show that $e_3(\beta)=0$ and $e_2(\beta)=0$.

By using \eqref{3.5}, \eqref{3.6} and choosing $k=3$ in
\eqref{3.15}, \eqref{3.15} becomes
\begin{eqnarray}\label{3.18}
e_3\Big(\frac{e_1(\beta)}{c_1H-\beta}\Big)=
\frac{e_3(\beta)}{c_2H-2\beta}\Big(\frac{e_1(c_2H-\beta)}{(c_1-c_2)H+\beta}-\frac{e_1(\beta)}{c_1H-\beta}\Big),
\end{eqnarray}
which can be rewritten as
\begin{eqnarray*}
e_3e_1(\beta)=\Big\{-\frac{e_1(\beta)}{c_1H-\beta}+\frac{c_1H-\beta}{c_2H-2\beta}
\Big(\frac{e_1(c_2H-\beta)}{(c_1-c_2)H+\beta}-\frac{e_1(\beta)}{c_1H-\beta}\Big)\Big\}e_3(\beta).
\end{eqnarray*}
Hence
\begin{eqnarray}\label{3.19}
&&e_3\Big(\frac{e_1(c_1H-\beta)}{(c_1-c_2)H+\beta}\Big)\nonumber\\
&&=\frac{e_3(\beta)}{c_2H-2\beta}\Big(\frac{e_1(c_2H-\beta)}{(c_1-c_2)H+\beta}-\frac{e_1(\beta)}{c_1H-\beta}\Big)
\frac{-(c_1+c_2)H+3\beta}{(c_1-c_2)H+\beta}.
\end{eqnarray}
Since $e_3(\beta)\neq0$, by differentiating \eqref{3.17} along
$e_3$ and by applying \eqref{3.3}, \eqref{3.9}, \eqref{3.10}, \eqref{3.18} and
\eqref{3.19} we get
\begin{equation}\label{3.20}
\frac{e_1(c_2H-\beta)}{(c_1-c_2)H+\beta}-\frac{e_1(\beta)}{c_1H-\beta}+H(2\beta-c_2H)[(c_1-c_2)H+\beta]=0.
\end{equation}
Also, by differentiating \eqref{3.20} along $e_3$ and by using \eqref{3.18} and
\eqref{3.19} we obtain
\begin{equation}\label{3.21}
\Big(\frac{e_1(c_2H-\beta)}{(c_1-c_2)H+\beta}-\frac{e_1(\beta)}{c_1H-\beta}\Big)\frac{-2c_1H+2\beta}{(c_1-c_2)H+\beta}
+H(c_2H-2\beta)[(2c_1-3c_2)H+4\beta]=0.
\end{equation}
Thus \eqref{3.21} together with \eqref{3.20} implies
\begin{eqnarray*}
3H(2\beta-c_2H)^2=0,
\end{eqnarray*}
which is equivalent to $2\beta-c_2H=0$; namely,
\begin{eqnarray*}
\lambda_2=\lambda_3.
\end{eqnarray*}
This contradicts to our assumption. Hence, we must have
$e_3(\beta)=0$.
By applying \eqref{3.16} for $k=2$, a quite similar argument as the
above case yields $e_2(\beta)=0$.

In the remaining case, we will prove $e_k(\beta)=0$ for $k\geq 4$.

After applying \eqref{3.6}, equations \eqref{3.15} and \eqref{3.16} could be
rewritten as
\begin{eqnarray}\label{3.22}
e_k\Big(\frac{e_1(\beta)}{c_1H-\beta}\Big)=-
\frac{e_k(\beta)}{(c_1+c_2)H-\beta}\Big(\frac{(c_1+c_2)e_1(H)}{c_2H}+\frac{e_1(\beta)}{c_1H-\beta}\Big),\\
\label{3.23} e_k\Big(\frac{e_1(c_2H-\beta)}{(c_1-c_2)H+\beta}\Big)=
\frac{e_k(\beta)}{c_1H+\beta}\Big(\frac{(c_1+c_2)e_1(H)}{c_2H}+\frac{e_1(c_2H-\beta)}{(c_1-c_2)H+\beta}\Big).
\end{eqnarray}
\indent Assume $e_k(\beta)\neq0$. Then after differentiating \eqref{3.17}
along $e_k$ and by applying \eqref{3.3}, \eqref{3.9}, \eqref{3.10},
\eqref{3.22} and \eqref{3.23}, we have
\begin{eqnarray}\label{3.24}
&&\Big\{\frac{1}{(c_1+c_2)H-\beta}\Big(\frac{(c_1+c_2)e_1(H)}{c_2H}+\frac{e_1(\beta)}{c_1H-\beta}\Big)\nonumber\\&&
-\frac{1}{c_1H+\beta}\Big(\frac{(c_1+c_2)e_1(H)}{c_2H}
+\frac{e_1(c_2H-\beta)}{(c_1-c_2)H+\beta}\Big)
\Big\}e_1(H)\nonumber\\&&+2H(2\beta-c_2H)=0.
\end{eqnarray}
Now, by differentiating \eqref{3.24} along $e_k$, we derive from
\eqref{3.3}, \eqref{3.9}, \eqref{3.22} and \eqref{3.23} that
$4He_k(\beta)=0,$ which is a contradiction. Consequently, we complete the proof of Lemma \ref{L:3.2}.
\end{proof}

\begin{lemma} \label{L:3.3}
The connection coefficients of the hypersurfaces $M^n$ are expressed as
follows:
\begin{eqnarray*}
&&\nabla_{e_1}e_1=\nabla_{e_1}e_2=\nabla_{e_1}e_3=0;\;\;
\nabla_{e_1}e_i=\sum_{k=4}^n\omega_{1i}^ke_k,~4\leq i\leq n;\\
&&\nabla_{e_i}e_1=\omega_{i1}^ie_i,~2\leq i\leq n;\; \;\nabla_{e_2}e_2=\omega_{22}^1e_1; \;\; \nabla_{e_3}e_3=\omega_{33}^1e_1;\\
&&\nabla_{e_2}e_i=\sum_{k=3}^{n}\omega_{2i}^ke_k, ~3\leq i\leq n;
\\&& \nabla_{e_3}e_i=\omega_{3i}^2e_2+\sum_{k=4}^{n}\omega_{3i}^ke_k,
~i=2,~{\rm or}
~3\leq i\leq n;\\
 &&\nabla_{e_j}e_2=\omega_{j2}^3e_3,~4\leq j\leq n;\;\;  \nabla_{e_j}e_3=\omega_{j3}^2e_2,~4\leq j\leq n;
\\
&&\nabla_{e_i}e_j=\omega_{ij}^1\delta_{ij}e_1+\sum_{k=4}^{n}\omega_{ij}^ke_k,
~4\leq i, j\leq n.
\end{eqnarray*}
\end{lemma}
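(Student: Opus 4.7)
The plan is to compute each connection coefficient $\omega_{ij}^{k}$ in the expansion $\nabla_{e_{i}}e_{j}=\sum_{k}\omega_{ij}^{k}e_{k}$ by systematically applying the Codazzi identities \eqref{3.6}--\eqref{3.7} to the specific eigenvalue pattern $\lambda_{1}=c_{1}H$, $\lambda_{2}=\beta$, $\lambda_{3}=c_{2}H-\beta$, $\lambda_{4}=\cdots=\lambda_{n}=(c_{1}+c_{2})H$, while invoking the orthonormality relations \eqref{3.4}--\eqref{3.5} and the previously established vanishings \eqref{3.10}, \eqref{3.12}, \eqref{3.13}, \eqref{3.14}. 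Since three principal curvatures are simple and the fourth has multiplicity $n-3$, the bookkeeping divides naturally according to whether the two ``active'' indices lie both in $\{2,3\}$, split between $\{2,3\}$ and $\{4,\ldots,n\}$, or both in $\{4,\ldots,n\}$.

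First I would eliminate every remaining $e_{1}$-component. Combining \eqref{3.10} with the earlier identities $\omega_{1i}^{1}=\omega_{i1}^{1}=0$ for $2\le i\le n$ gives $\omega_{ij}^{1}=0$ whenever $i\ne j$, so no off-diagonal $\nabla_{e_{i}}e_{j}$ carries an $e_{1}$-component and the only survivors are $\omega_{ii}^{1}$ for $i\ge 2$; at the same time \eqref{3.12} collapses $\nabla_{e_{i}}e_{1}$ to the single term $\omega_{i1}^{i}e_{i}$ for $2\le i\le n$.

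Next I would knock out the $e_{2}$- and $e_{3}$-components one index regime at a time. Feeding $(k,j)=(2,2)$ and varying $i\ge 3$ into \eqref{3.7} gives $(\lambda_{i}-\lambda_{2})\omega_{2i}^{2}=0$, hence $\omega_{2i}^{2}=0$, and \eqref{3.5} then yields $\omega_{22}^{i}=0$, producing $\nabla_{e_{2}}e_{2}=\omega_{22}^{1}e_{1}$; the mirror argument with $2\leftrightarrow 3$ gives $\nabla_{e_{3}}e_{3}=\omega_{33}^{1}e_{1}$. Running \eqref{3.7} with $(k,j)=(1,2)$ and $(1,3)$ eliminates the $e_{k}$-components of $\nabla_{e_{1}}e_{2}$ and $\nabla_{e_{1}}e_{3}$ and restricts $\nabla_{e_{1}}e_{i}$ ($i\ge 4$) to the $e_{4},\ldots,e_{n}$ directions. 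A parallel sweep combining \eqref{3.13}--\eqref{3.14} with \eqref{3.7} applied at $(k,j)=(j,j)$ for $j\ge 4$ collapses $\nabla_{e_{j}}e_{2}$ and $\nabla_{e_{j}}e_{3}$ to the single terms $\omega_{j2}^{3}e_{3}$ and $\omega_{j3}^{2}e_{2}$, and trims $\nabla_{e_{i}}e_{j}$ ($4\le i,j\le n$) to $\omega_{ij}^{1}\delta_{ij}e_{1}+\sum_{k=4}^{n}\omega_{ij}^{k}e_{k}$. Lemma~\ref{L:3.2} provides a convenient shortcut, since \eqref{3.6} at $j=2$ gives $\omega_{2i}^{2}=0$ directly from $e_{i}(\beta)=0$.

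I do not foresee a serious technical obstacle; the entire content is index bookkeeping, and the pairwise distinctness of the four principal curvatures ensures that every factor $\lambda_{p}-\lambda_{q}$ used to divide in \eqref{3.7} is nonzero precisely when required. The only mild pitfall is to keep the diagonal relations \eqref{3.4} separate from the off-diagonal relations \eqref{3.5}, and not confuse a coefficient $\omega_{ij}^{k}$ with any of its index permutations; organizing the case list by the position of the two active indices keeps the verification compact.
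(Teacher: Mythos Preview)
Your plan is essentially the same as the paper's, which simply says the lemma follows from a straightforward computation using (3.3)--(3.8), (3.11)--(3.14) and Lemma~\ref{L:3.2}; you have made that computation explicit, and the structure is right.

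There is one slip worth fixing. You invoke \eqref{3.7} with $(k,j)=(2,2)$ and later with $(k,j)=(j,j)$, but \eqref{3.7} requires $i,j,k$ mutually distinct, so those applications are vacuous. The vanishings you need there, namely $\omega_{2i}^{2}=\omega_{3i}^{3}=0$ for $i\ge 2$ (with $i$ distinct from the relevant index) and $\omega_{j2}^{j}=\omega_{j3}^{j}=0$ for $j\ge 4$, come instead from \eqref{3.6}: since $e_i(\beta)=0$ by Lemma~\ref{L:3.2} and $e_i(H)=0$ by \eqref{3.3} for $i\ge 2$, the left-hand side of \eqref{3.6} vanishes, and the distinctness of the principal curvatures forces the connection coefficient to vanish. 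So what you call a ``convenient shortcut'' via Lemma~\ref{L:3.2} is in fact the only route at those steps; promote it to the main argument and the proof goes through exactly as you outline.
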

\begin{proof}
By using (3.3)-(3.8), (3.11)-(3.14) and Lemma \ref{L:3.2}, a
straightforward computation gives the connection coefficients of
$M^n$ in Lemma \ref{L:3.3}.
\end{proof}

By applying Gauss' equation and Lemma 3.3, we compute $\langle
R(X, Y)Z, W\rangle$. Then we obtain successively:
\begin{itemize}
\item $X=Z=e_1, Y=W=e_2$,
\begin{eqnarray}
e_1(\omega_{21}^2)+(\omega_{21}^2)^2=-c_1H\beta;\label{3.31}
\end{eqnarray}
\item $X=Z=e_1, Y=W=e_3$,
\begin{eqnarray}
e_1(\omega_{31}^3)+(\omega_{31}^3)^2=-c_1H(c_2H-\beta);\label{3.32}
\end{eqnarray}
\item $X=Z=e_1, Y=W=e_4$,
\begin{eqnarray}
e_1(\omega_{41}^4)+(\omega_{41}^4)^2=-c_1(c_1+c_2)H^2;\label{3.33}
\end{eqnarray}
\item $X=Z=e_j, Y=W=e_2$ for $4\leq j\leq n$,
\begin{eqnarray}
\omega_{jj}^1\omega_{21}^2-\omega_{2j}^3\omega_{j3}^2+(\omega_{j2}^3-\omega_{2j}^3)\omega_{3j}^2=(c_1+c_2)\beta
H;\label{3.34}
\end{eqnarray}
\item $X=Z=e_j, Y=W=e_3$ for $4\leq j\leq n$,
\begin{eqnarray}
\omega_{jj}^1\omega_{31}^3-\omega_{3j}^2\omega_{j2}^3+(\omega_{j3}^2-\omega_{3j}^2)\omega_{2j}^3=(c_1+c_2)
H(c_2H-\beta);\label{3.35}
\end{eqnarray}
\item $X=Z=e_3, Y=W=e_2$,
\begin{eqnarray}
\omega_{33}^1\omega_{21}^2-\sum_{j=4}^n\omega_{23}^j\omega_{3j}^2+\sum_{j=4}^n(\omega_{32}^j-\omega_{23}^j)\omega_{j3}^2=\beta(c_2H-\beta);\label{3.36}
\end{eqnarray}
\item $X=e_2, Y=e_j, Z=e_3, W=e_1$ for $4\leq j\leq n$,
\begin{eqnarray*}
\omega_{j3}^2\omega_{22}^1-\omega_{23}^j\omega_{jj}^1-(\omega_{2j}^3-\omega_{j2}^3)\omega_{33}^1=0\label{3.37};
\end{eqnarray*}
\item $X=e_3, Y=e_j, Z=e_2, W=e_1$ for $4\leq j\leq n$,
\begin{eqnarray*}
\omega_{j2}^3\omega_{33}^1-\omega_{32}^j\omega_{jj}^1-(\omega_{3j}^2-\omega_{j3}^2)\omega_{22}^1=0.\label{3.38}
\end{eqnarray*}
\end{itemize}
Moreover, it follows from \eqref{3.5} and \eqref{3.7} that
\begin{eqnarray}
&&\omega_{23}^j=-\omega_{2j}^3=-\frac{h_j}{\beta+c_1H}\label{3.39},\\
&&\omega_{32}^j=-\omega_{3j}^2=\frac{h_j}{\beta-(c_1+c_2)H},\label{3.40}\\
&&\omega_{j2}^3=-\omega_{j3}^2=\frac{h_j}{2\beta-c_2H},\label{3.41}
\end{eqnarray}
where $h_j$ $(4\leq j\leq n)$ are smooth functions defined on
$M^n$. Therefore (3.31)-(3.33) imply that
\begin{eqnarray*}
&&-\omega_{2j}^3\omega_{j3}^2=(\omega_{j2}^3-\omega_{2j}^3)\omega_{3j}^2,\\
&&-\omega_{3j}^2\omega_{j2}^3=(\omega_{j3}^2-\omega_{3j}^2)\omega_{2j}^3,\\
&&-\omega_{23}^j\omega_{3j}^2=(\omega_{32}^j-\omega_{23}^j)\omega_{j3}^2.
\end{eqnarray*}
By combining \eqref{3.34}, \eqref{3.35} and \eqref{3.36} with the above
three equations, we get
\begin{eqnarray}
&&\omega_{jj}^1\omega_{21}^2-2\omega_{2j}^3\omega_{j3}^2=(c_1+c_2)\beta
H, \quad 4\leq j\leq n,\label{3.42}\\
&&\omega_{jj}^1\omega_{31}^3-2\omega_{3j}^2\omega_{j2}^3=(c_1+c_2)
H(c_2H-\beta), \quad 4\leq j\leq n,\label{3.43}\\
&&\omega_{33}^1\omega_{21}^2-2\sum_{j=4}^n\omega_{23}^j\omega_{3j}^2=\beta(c_2H-\beta).\label{3.44}
\end{eqnarray}
Since $\omega_{44}^1=\omega_{55}^1=\cdots=\omega_{nn}^1$ and
$\omega_{2j}^3\omega_{j3}^2+\omega_{3j}^2\omega_{j2}^3+\omega_{23}^j\omega_{3j}^2=0,$
we derive from (3.34)-(3.36) that
\begin{eqnarray}
&&(n-3)\omega_{jj}^1\omega_{21}^2+(n-3)\omega_{jj}^1\omega_{31}^3+\omega_{33}^1\omega_{21}^2\label{3.45}\\
&&=(n-3)(c_1+c_2)c_2H^2+\beta(c_2H-\beta),\nonumber
\end{eqnarray}
By using \eqref{3.6}, equations (3.25)-(3.27) can be rewritten,
respectively,  as
\begin{eqnarray}
&&e_1e_1(\beta)-c_1\omega_{21}^2e_1(H)+2(c_1H-\beta)(\omega_{21}^2)^2=-c_1H\beta(c_1H-\beta),\label{3.46}\\
&&e_1e_1(c_2H-\beta)-c_1\omega_{31}^3e_1(H)+2\big\{(c_1-c_2)H+\beta\big\}(\omega_{31}^3)^2\label{3.47}\\
&&=-c_1H(c_2H-\beta)\big\{(c_1-c_2)H+\beta\big\},\nonumber\\
&&(c_1+c_2)e_1e_1(H)-c_1\omega_{41}^4e_1(H)-2c_2H(\omega_{41}^4)^2=c_1c_2(c_1+c_2)H^3.\label{3.48}
\end{eqnarray}
Also, it follows from \eqref{3.6} that
\begin{eqnarray}
&&(c_1H-\beta)\omega_{21}^2+\big\{(c_1-c_2)H+\beta\big\}\omega_{31}^3=c_2e_1(H),\label{3.49}\\
&&-c_2H\omega_{41}^4=(c_1+c_2)e_1(H).\label{3.50}
\end{eqnarray}
Eliminating $e_1e_1(\beta)$ between \eqref{3.46} and \eqref{3.47}
and applying \eqref{3.45}, \eqref{3.49} and \eqref{3.50} give
\begin{eqnarray}
&&c_2^2e_1e_1(H)+\big\{2(n-3)(c_1+c_2)(2c_1-c_2)+c_2(2c_2-c_1)\big\}(\omega_{21}^2+\omega_{31}^3)e_1(H)\nonumber\\
&&=\big\{2(n-3)(2c_1-c_2)(c_1+c_2)c_2^2-c_1c_2^2(c_1-c_2)\big\}H^3\nonumber\\
&&+2c_2^2(c_1-c_2)H^2\beta-2c_2(c_1-c_2)H\beta^2.\label{3.51}
\end{eqnarray}
Moreover, eliminating $(\omega_{41}^4)^2$ in \eqref{3.48} by
\eqref{3.50} we obtain
\begin{eqnarray}
(c_1+c_2)e_1e_1(H)+(c_1+2c_2)\omega_{41}^4e_1(H)=c_1c_2(c_1+c_2)H^3.\label{3.52}
\end{eqnarray}
Note that \eqref{3.17} takes the form
\begin{eqnarray}\label{3.53}
&&-e_1e_1(H)-\big\{\omega_{21}^2+\omega_{31}^3+(n-3)\omega_{41}^4\big\}e_1(H)
\\&&+\big\{c_1^2+c_2^2+(n-3)(c_1+c_2)^2\big\}H^3-2c_2H^2\beta+2H\beta^2-aH=0.\nonumber
\end{eqnarray}
By substituting $c_1=-\frac{n}{2}$ and $c_2=\frac{n^2}{2(n-2)}$ into
(3.43)-(3.45), we get
\begin{eqnarray}
&&e_1e_1(H)-\frac{9n^2-50n+48}{n^2}(\omega_{21}^2+\omega_{31}^3)e_1(H)\label{3.54}\\
&&+\frac{n^2(7n^2-29n+26)}{2(n-2)^2}H^3+\frac{2n(n-1)}{n-2}H^2\beta-\frac{4(n-1)}{n}H\beta^2=0,\nonumber\\
&&e_1e_1(H)+\frac{n+2}{2}\omega_{41}^4e_1(H)+\frac{n^3}{4(n-2)}H^3=0,\label{3.55}\\
&&-e_1e_1(H)-\big\{\omega_{21}^2+\omega_{31}^3+(n-3)\omega_{41}^4\big\}e_1(H)\label{3.56}
\\&&+\frac{n^2(n+2)}{2(n-2)}H^3-\frac{n^2}{n-2}H^2\beta+2H\beta^2-aH=0.\nonumber
\end{eqnarray}
Eliminating $e_1e_1(H)$, equations (3.46)-(3.48) reduce to
\begin{eqnarray*}
&&\Big\{\frac{9n^2-50n+48}{n^2}(\omega_{21}^2+\omega_{31}^3)+\frac{n+2}{2}\omega_{41}^4\Big\}e_1(H)\\
&&=\frac{n^2(13n^2-56n+52)}{4(n-2)^2}H^3+\frac{2n(n-1)}{n-2}H^2\beta+\frac{4(2n-3)}{n}H\beta^2,\nonumber\\
&&\Big\{\omega_{21}^2+\omega_{31}^3+\frac{n-8}{2}\omega_{41}^4\Big\}e_1(H)\\
&&=\frac{n^2(3n+4)}{4(n-2)}H^3-\frac{n^2}{n-2}H^2\beta+2H\beta^2-aH.\nonumber
\end{eqnarray*}
These equations imply
\begin{eqnarray}
&&\frac{2(2n^3+31n^2-112n+96)}{n^2}\omega_{41}^4e_1(H)\label{3.57}\\
&&=\frac{7n^4-56n^3+86n^2+152n-192}{2(n-2)^2}H^3-\frac{11n^2-52n+48}{n-2}H^2\beta\nonumber\\
&&+\frac{2(5n^2-44n+48)}{n^2}H\beta^2-\frac{a(9n^2-50n+48)}{n^2}H,\nonumber\\
&&\frac{2(2n^3+31n^2-112n+96)}{n^2}\big(\omega_{21}^2+\omega_{31}^3\big)e_1(H)\label{3.58}\\
&&=\frac{n^2(5n^3-82n^2+256n-200)}{4(n-2)^2}H^3+\frac{n(3n^2-16n+16)}{2(n-2)}H^2\beta\nonumber\\
&&+\frac{3n^2-40n+48}{n}H\beta^2+\frac{a(n+2)}{2}H.\nonumber
\end{eqnarray}
After combining \eqref{3.58} with \eqref{3.50} and \eqref{3.45}, we get
\begin{eqnarray}
&&\frac{2n^3+31n^2-112n+96}{n(n-3)}\omega_{21}^2\omega_{31}^3\label{3.59}\\
&&=\frac{n^2(n^3-144n^2+480n-392)}{4(n-2)^2}H^2+\frac{n(n^3-56n^2+176n-144)}{2(n-2)(n-3)}H\beta\nonumber\\
&&+\frac{5n^3-18n^2+56n-48}{n(n-3)}\beta^2+\frac{a(n+2)}{2}H.\nonumber
\end{eqnarray}
Now taking into account \eqref{3.50}, \eqref{3.57} becomes
\begin{eqnarray}
&&-\frac{4(2n^3+31n^2-112n+96)}{n^3}\big(e_1(H)\big)^2\label{3.60}\\
&&=\frac{7n^4-56n^3+86n^2+152n-192}{2(n-2)^2}H^4-\frac{11n^2-52n+48}{n-2}H^3\beta\nonumber\\
&&+\frac{2(5n^2-44n+48)}{n^2}H^2\beta^2-\frac{a(9n^2-50n+48)}{n^2}H^2.\nonumber
\end{eqnarray}
After differentiating \eqref{3.60} with respect to $e_1$ and by using
\eqref{3.57}, \eqref{3.55}, \eqref{3.49} and \eqref{3.6}, we obtain
\begin{eqnarray}
L(H,\beta)\omega_{21}^2+M(H,\beta)\omega_{31}^3=0,\label{3.61}
\end{eqnarray}
where $L$ and $M$ take the form
\begin{eqnarray*}
&&L(H,\beta)=a_0H^4+a_1H^3\beta+a_2H^2\beta^2+a_3H\beta^3+a_4H^2+a_5H\beta,\\
&&M(H,\beta)=b_0H^4+b_1H^3\beta+b_2H^2\beta^2+b_3H\beta^3+b_4H^2+b_5H\beta
\end{eqnarray*}
for some constants $a_i$ and $b_i$ with respect to $n$.

Moreover, substituting \eqref{3.49} into \eqref{3.58} yields
\begin{eqnarray}
\big(\omega_{21}^2+\omega_{31}^3\big)\Big\{(c_1H-\beta)\omega_{21}^2+[(c_1-c_2)H+\beta]\omega_{31}^3\Big\}=N(H,\beta),\label{3.62}
\end{eqnarray}
where
\begin{eqnarray*}
N(H,\beta)=c_0H^3+c_1H^2\beta+c_2H\beta^2+c_3H
\end{eqnarray*}
for some constants $c_i$ with respect to $n$.

To eliminate $\omega_{21}^2$ and $\omega_{31}^3$ from \eqref{3.59},
\eqref{3.61} and \eqref{3.62}, a direct computation gives the
following equation of ninth degree involving $H$ and $\beta$
\begin{eqnarray}
&&c_{90}H^9+c_{81}H^8\beta+c_{72}H^7\beta^2+c_{63}H^6\beta^3+c_{54}H^5\beta^4
+c_{45}H^4\beta^5\label{3.63}\\
&&+c_{36}H^3\beta^6+c_{27}H^2\beta^7+c_{18}H\beta^8+c_{09}\beta^9+c_{70}H^7+c_{61}H^6\beta\nonumber\\
&&+c_{52}H^5\beta^2+c_{43}H^4\beta^3+c_{34}H^3\beta^4+c_{25}H^2\beta^5+c_{16}H\beta^6+c_{07}\beta^7\nonumber\\
&&+c_{50}H^5+c_{41}H^4\beta+c_{32}H^3\beta^2+c_{23}H^2\beta^3+c_{14}H\beta^4+c_{05}\beta^5\nonumber\\
&&+c_{30}H^3+c_{21}H^2\beta+c_{12}H\beta^2+c_{03}\beta^3=0,\nonumber
\end{eqnarray}
where the coefficients $c_{ij}$ ($i, j=0,\ldots,9$) are constants
concerning $n$.

Note that $\beta$ is not constant in general. In fact, if $\beta$ is
a constant, then \eqref{3.63} is an algebraic equation of $H$ with
constant coefficients. Thus the real function $H$ must be a
constant and hence the conclusion follows immediately.

Now, let us consider an integral curve of the vector field $e_1$ passing through $p=\gamma(t_0)$ as $\gamma(t), t\in I$.  Because $e_1(H),
e_1(\beta)\neq0$  and $e_i(H)=e_i(\beta)=0$ for $2\leq i\leq n$ according to Lemma \ref{L:3.2}, we may assume that $t=t(\beta)$ and $H=H(\beta)$ in
some neighborhood of $\beta_0=\beta(t_0)$.
It follows from \eqref{3.6}, \eqref{3.49} and \eqref{3.61} that
\begin{eqnarray}
\frac{dH}{d\beta}&=&\frac{dH}{dt}\frac{dt}{d\beta}=\frac{e_1(H)}{e_1(\beta)}\label{3.64}\\
&=&\frac{(c_1H-\beta)\omega_{21}^2+[(c_1-c_2)H+\beta]\omega_{31}^3}{c_2(c_1H-\beta)\omega_{21}^2}\nonumber\\
&=&\frac{1}{c_2}-\frac{[(c_1-c_2)H+\beta]L}{c_2(c_1H-\beta)M}.\nonumber
\end{eqnarray}
By differentiating \eqref{3.63} with respect to $\beta$, together
with \eqref{3.64}, we get another algebraic equation of twelfth
degree involving $H$ and $\beta$:
\begin{eqnarray}
&&b_{12,0}H^{12}+b_{11,1}H^{11}\beta+b_{10,2}H^{10}\beta^2+b_{93}H^9\beta^3+b_{84}H^8\beta^4
+b_{75}H^7\beta^5\label{3.65}\\
&&+b_{66}H^6\beta^6+b_{57}H^5\beta^7+b_{48}H^4\beta^8+b_{39}H^3\beta^9+b_{2,10}H^2\beta^{10}+b_{1,11}H\beta^{11}\nonumber\\
&&+b_{0,12}\beta^{12}+b_{10,0}H^{10}+b_{91}H^9\beta+b_{82}H^8\beta^2+b_{73}H^7\beta^3+b_{64}H^6\beta^4
\nonumber\\
&&+b_{55}H^5\beta^5+b_{46}H^4\beta^6+b_{37}H^3\beta^7+b_{28}H^2\beta^8+b_{19}H\beta^9+b_{0,10}\beta^{10}+b_{80}H^8\nonumber\\
&&+b_{71}H^7\beta+b_{62}H^6\beta^2+b_{53}H^5\beta^3+b_{44}H^4\beta^4+b_{35}H^3\beta^5+b_{26}H^2\beta^6+b_{17}H\beta^7\nonumber\\
&&+b_{08}\beta^8+b_{60}H^6+b_{51}H^5\beta+b_{42}H^4\beta^2+b_{33}H^3\beta^3+b_{24}H^2\beta^4+b_{15}H\beta^5\nonumber\\
&&+b_{06}\beta^6+b_{40}H^4+b_{31}H^3\beta+b_{22}H^2\beta^2+b_{13}H\beta^3+b_{04}\beta^4=0,\nonumber
\end{eqnarray}
where the coefficients $b_{ij}$ ($i, j=0,\ldots,12$) are some
constants concerning $n$.

We may rewrite \eqref{3.63} and \eqref{3.65} respectively in the
following forms
\begin{eqnarray}\label{3.66}
\sum_{i=0}^9q_i(H)\beta^i=0,\qquad \sum_{j=0}^{12}{\bar
q}_j(H)\beta^j=0,
\end{eqnarray}
where $q_i(H)$ and ${\bar q}_j(H)$ are polynomials functions of the mean curvature
function $H$. After eliminating $\beta$ between the two equations in
\eqref{3.66}, we obtain a non-trivial algebraic polynomial equation
of $H$ with constant coefficients. Hence we conclude that the real
function $H$ has to be a constant, which contradicts to our original
assumption. Therefore the mean curvature function $H$ must be
constant for any $\delta(3)$-ideal null 2-type hypersurfaces in a
Euclidean space.

Consequently, we have proved the following theorem.
\begin{theorem}
Every $\delta(3)$-ideal null 2-type hypersurface in a Euclidean
space must have constant mean curvature and constant scalar
curvature.
\end{theorem}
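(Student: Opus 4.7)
The plan is to argue by contradiction: assume $H$ is non-constant on some open set. Proposition~2 gives the block diagonal shape operator \eqref{2.10}, and Lemma~\ref{L:3.1}, obtained by combining \eqref{2.10} with the tangential null 2-type equation $2A\,\mathrm{grad}\,H + nH\,\mathrm{grad}\,H = 0$, forces the eigenvalue $\alpha = -nH/2$ to have multiplicity one and $\gamma = n^2H/(2(n-2)) - \beta$, so that the unit direction $e_1 := \mathrm{grad}\,H / |\mathrm{grad}\,H|$ is principal with $e_i(H) = 0$ for $i \ge 2$. Choosing an orthonormal frame $\{e_1,\ldots,e_n\}$ of principal vectors adapted to this decomposition, I would then use the Codazzi identities $e_i(\lambda_j) = (\lambda_i - \lambda_j)\omega_{ji}^j$ and $(\lambda_i - \lambda_j)\omega_{ki}^j = (\lambda_k - \lambda_j)\omega_{ik}^j$, together with $[e_i,e_j]H = 0$ for $i,j \ge 2$, to force the vanishing of most of the Christoffel symbols $\omega_{ij}^k$, yielding the concise covariant derivative formulas of Lemma~\ref{L:3.3}.

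A key preparatory step is Lemma~\ref{L:3.2}: the function $\beta$ is annihilated by all directions $e_2,\ldots,e_n$. I would prove this by expanding the Gauss equation entries $\langle R(e_2,e_k)e_1,e_2\rangle = 0$ and $\langle R(e_3,e_k)e_1,e_3\rangle = 0$, and then differentiating the scalar null 2-type equation $\Delta H + H\,\mathrm{Tr}\,A^2 = aH$ along $e_k$. After substitution the resulting identity should force either two of the four distinct principal curvatures to coincide or $e_k(\beta) = 0$; distinctness leaves only the latter. With Lemma~\ref{L:3.2} in hand, I would next extract algebraic consequences of the Gauss equation for the remaining index triples, arriving at explicit expressions involving $\omega_{21}^2$, $\omega_{31}^3$, $\omega_{41}^4$, $e_1(H)$, $e_1e_1(H)$, $H$, and $\beta$.

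The heart of the argument is then an elimination game. Combining the Gauss-derived identities with the $e_1$-derivative of the tangential null 2-type equation and with the scalar equation \eqref{3.53} rewritten via $c_1 = -n/2$ and $c_2 = n^2/(2(n-2))$, I would express $\omega_{21}^2$, $\omega_{31}^3$, $\omega_{41}^4$ rationally in $H$, $\beta$, and $e_1(H)$, and substitute back to produce a polynomial relation of total degree nine in $(H,\beta)$ whose coefficients depend only on $n$. Restricting to an integral curve of $e_1$, the chain rule gives $dH/d\beta = e_1(H)/e_1(\beta)$, again a rational function of $(H,\beta)$, so differentiating the degree-nine identity along $e_1$ produces a second polynomial identity of degree twelve. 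Viewing both as polynomials in $\beta$ with coefficients in $\mathbb{R}[H]$ and taking their resultant eliminates $\beta$, leaving a non-trivial polynomial equation in $H$ with constant coefficients. This forces $H$ to be locally constant, contradicting the standing hypothesis; once $H$ is constant the first equation of \eqref{2.7} forces $\mathrm{Tr}\,A^2$ constant, and then \eqref{2.6} gives $\tau$ constant.

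The hardest step, as I foresee it, is not any single computation but verifying non-triviality of the final resultant: namely, that the degree-nine and degree-twelve polynomials in $\beta$ share no common factor of positive degree over $\mathbb{R}(H)$. This is less a conceptual obstacle than a delicate bookkeeping of the $n$-dependent coefficients, together with a separate treatment of the degenerate case in which $\beta$ happens to be constant along the integral curve of $e_1$ — a case that collapses the argument immediately, since \eqref{3.63} then becomes a constant-coefficient polynomial in $H$.
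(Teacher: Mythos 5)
Your proposal follows essentially the same route as the paper: the same reduction via Lemma~\ref{L:3.1} to a principal direction $e_1$ parallel to $\mathrm{grad}\,H$, the same key Lemma~\ref{L:3.2} that $e_i(\beta)=0$ for $i\geq 2$, the same Gauss--Codazzi elimination producing the degree-nine and degree-twelve polynomial relations in $(H,\beta)$, and the same resultant argument (with the constant-$\beta$ case handled separately) forcing $H$ to be constant. The plan is correct and matches the paper's proof in both structure and detail.
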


\section*{Acknowledgments}
The authors would like to thank the referees for the very valuable
suggestions and comments to improve the original version of the
paper. The second author is supported by the Mathematical Tianyuan
Youth Fund of China (No. 11326068), Project funded by China
Postdoctoral Science Foundation (No. 2014M560216) and the Excellent
Innovation talents Project of DUFE (No. DUFE2014R26).

\end{document}